\newcommand{\nd}{\operatorname{nd}}
\newcommand{\ud}{\operatorname{ud}}
\newcommand{\rank}{\operatorname{rank}}
\newcommand{\norm}[1]{\left\Vert#1\right\Vert}
\newcommand{\diag}{\operatorname{diag}}
\newtheorem{thm}{Theorem}
\newtheorem{prop}{Proposition}
\newtheorem{cor}{Corollary}
\newtheorem{ex}{Example}
\newtheorem{conj}{Conjecture}
\begin{document}

\title{The Normal Defect of Some Classes of Matrices}
\author[Drexel]{Ryan D. Wasson}
\ead{rdw42@drexel.edu}

\author[Drexel]{Hugo J. Woerdeman}
\ead{hugo@math.drexel.edu}

\address[Drexel]{Department of Mathematics, Drexel University, Philadelphia, PA  19104, USA}

\begin{abstract}
An $n \times n$ matrix $A$ has a normal defect of $k$ if there exists an $(n+k) \times (n+k)$ normal matrix $A_{ext}$ with $A$ as a leading principal submatrix and $k$ minimal.  In this paper we compute the normal defect of a special class of $4\times4$ matrices, namely matrices whose only nonzero entries lie on the superdiagonal, and we provide details for constructing minimal normal completion matrices $A_{ext}$.  We also prove a result for a related class of $n\times n$ matrices.  Finally, we present an example of a $6 \times 6$ block diagonal matrix having the property that its normal defect is strictly less than the sum of the normal defects of each of its blocks, and we provide sufficient conditions for when the normal defect of a block diagonal matrix is equal to the sum of the normal defects of each of its blocks.
\end{abstract}

\begin{keyword}
normal defect \sep normal completions
\MSC 15A24 \sep 15A42 \sep 15A57
\end{keyword}

\maketitle

\section{Introduction}

An $n \times n$ matrix $A$ is called \textit{normal} if and only if $A$ commutes with its Hermitian adjoint, i.e., $AA^*=A^*A$.  If $A$ is non-normal, one can construct an $(n+k) \times (n+k)$ normal matrix $A_{ext}$, with $A$ as a leading principal submatrix, called a \textit{normal completion} of $A$.  For example, the matrix $\begin{pmatrix}
A & A^* \\
A^* & A 
\end{pmatrix}$ is a normal completion for any matrix $A$ \cite{PRH}.  An interesting problem first considered in \cite{W} asks the question of how to construct a normal completion of $A$ of smallest possible size (or smallest possible $k$), called a \textit{minimal normal completion} of $A$.  The \textit{normal defect} of $A$, denoted $\nd(A)$, is defined to be the integer $k$ for which $A_{ext} \in \mathbb{C}^{(n+ k) \times (n+k)}$ is of minimal size.  A normal matrix $A$ satisfies $\nd(A)=0$.  Recently, a characterization of matrices with normal defect one was given in \cite{VSW}.

In general, determining the normal defect of a matrix is a challenging problem.  In \cite{KW} (lower bound) and \cite{W} (upper bound), it was shown that the normal defect of a matrix $A$ is bounded by 
\begin{equation} \label{bounds}
\max{ \{i_+[A,A^\ast], i_-[A,A^\ast]\}} \leq \nd(A) \leq \rank(\norm A^2 I_n - A^*A).
\end{equation}  Here $i_\pm(M)$ refers to the number of positive/negative eigenvalues of $M=M^*$.  If these two bounds are equal, then the normal defect of $A$ is immediately known.  However, most of the time this is not the case.

A method for solving this type of problem is to assume that a normal completion of size $(n+\varepsilon(A))\times (n+\varepsilon(A))$ exists, where 
\begin{equation}\label{epsilon definition}
\varepsilon(A):=\max{ \{i_+[A,A^\ast], i_-[A,A^\ast]\}},
\end{equation} and to either solve the system of polynomial equations that results or find a contradiction in the equations.  If there is a contradiction, the process must be repeated with a larger matrix.  This method works but is extremely impractical for large matrices.  For example, using this method on a $4 \times 4$ matrix $A$ satisfying the bounds $2 \leq \nd(A) \leq 3$ requires working with a system of at most $20$ complex variables (or 19 after a simple reduction) when searching for a normal completion of size $6\times 6$.

In this paper, we answer a couple of questions posed in \cite{BW} on the subject of minimal normal completions and explore the consequences of our results.  Specifically, we focus on matrices of a particular form.  In Section \ref{nonzero superdiagonal} we present results on the normal defect of matrices with nonzero entries on the superdiagonal, and in Section \ref{block diagonal} we present results on the normal defect of block diagonal matrices.  We conclude in Section \ref{open questions} with a discussion of open questions that arose during our research.

\section{Matrices with nonzero entries on the superdiagonal} \label{nonzero superdiagonal}

In \cite{KW} (see also \cite[Section 5.9]{BW}), the question was raised as to what the normal defect of a $4\times4$ matrix is whose only nonzero entries lie on the superdiagonal.  A partial answer to this question was given in \cite[Proposition 1]{KW} for the case when the entries are arranged in descending or ascending order by magnitude.  However, it was pointed out in \cite{BW} that the normal defect of 
\begin{equation}\label{0132 matrix}
\begin{pmatrix}  
0 & 1 & 0 & 0 \\
0 & 0 & 3 & 0 \\
0 & 0 & 0 & 2 \\
0 & 0 & 0 & 0 
    \end{pmatrix}
\end{equation} was unknown.  The following result provides the answer to this question.  In addition, the proof of the theorem below provides details for constructing a minimal normal completion matrix.

\begin{thm} \label{thm 0abc matrix}
Let $A \in \mathbb{C}^{4 \times 4}$ be of the form $ A = \begin{pmatrix} 
0 & a & 0 & 0 \\
0 & 0 & b & 0 \\
0 & 0 & 0 & c \\
0 & 0 & 0 & 0 
    \end{pmatrix}$.  Then $\nd(A) = \varepsilon(A)$.  If $A$ is real, then a minimal normal completion can be chosen to be real as well.
    \end{thm}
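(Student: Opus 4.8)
The plan is to reduce to the cases left open by \cite{KW} and then to exhibit an explicit minimal normal completion. First I would compute the self-commutator and find
$[A,A^*]=AA^*-A^*A=\diag\!\left(|a|^2,\;|b|^2-|a|^2,\;|c|^2-|b|^2,\;-|c|^2\right)$,
which is diagonal. Reading off the signs of its entries shows that $\varepsilon(A)=3$ exactly when $|a|,|b|,|c|$ is monotone (ascending or descending), while $\varepsilon(A)=2$ precisely when $|b|$ is strictly largest or strictly smallest. In the monotone case the upper bound in \eqref{bounds} also equals $3$, so $\nd(A)=3=\varepsilon(A)$ is forced, which is essentially the content of \cite[Prop.~1]{KW}; thus the real work lies in the two non-monotone cases, where \eqref{bounds} only gives $2\le\nd(A)\le 3$ and a normal completion of size $6$ must be produced. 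Conjugating by a diagonal unitary lets me assume $a,b,c\ge 0$ throughout, and this reduction stays real when $A$ is real.

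Next I would write the candidate completion in block form $A_{ext}=\begin{pmatrix}A&B\\ C&D\end{pmatrix}$ with $B\in\mathbb{C}^{4\times2}$, $C\in\mathbb{C}^{2\times4}$, $D\in\mathbb{C}^{2\times2}$, and record the three conditions coming from $A_{ext}A_{ext}^*=A_{ext}^*A_{ext}$: the $(1,1)$ block gives $BB^*-C^*C=A^*A-AA^*$, the $(2,2)$ block gives $DD^*-D^*D=B^*B-CC^*$, and the $(1,2)$ block gives $AC^*+BD^*=A^*B+C^*D$. The $(1,1)$ equation asks for a decomposition of the Hermitian matrix $A^*A-AA^*$, of signature $(2,2)$ in the non-monotone case, as a difference of two positive semidefinite matrices of rank $2$; such a decomposition exists exactly because $i_+[A,A^*]=i_-[A,A^*]=2$, which is where the lower bound $\varepsilon(A)$ enters. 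I also note that the identity $\operatorname{tr}(A^*A-AA^*)=0$ makes the necessary trace condition $\operatorname{tr}(B^*B)=\operatorname{tr}(CC^*)$ for the $(2,2)$ block automatic, which is a useful consistency check.

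The heart of the argument, and the main obstacle, is that the rank-$2$ positive/negative splitting in the $(1,1)$ block is far from unique, and the obvious spectral (coordinate-aligned) choice, with $BB^*$ and $C^*C$ supported on complementary pairs of coordinates, does \emph{not} in general extend to a full solution: if one forces that choice and solves the $(1,2)$ block for $D$, the two halves of the equation become compatible only when $a=c$. I would therefore treat the splitting itself as an unknown, using the residual freedom $B\mapsto BU$, $C\mapsto VC$ for $2\times2$ unitaries $U,V$ together with the entries of $D$, and solve the coupled system directly. The difficulty is genuinely nonlinear, since $D$ and $D^*$ both occur in the $(1,2)$ block, so it is not a Sylvester equation; guided by relations such as $C_0^*D=a\,E\,C_0^*$ and $B_0D^*=\bar c\,E^{\mathsf{T}}B_0$ (with $E=\left(\begin{smallmatrix}0&1\\0&0\end{smallmatrix}\right)$ and $B_0,C_0$ the relevant $2\times2$ sub-blocks) that the $(1,2)$ block produces for a coordinate-aligned splitting, I expect to locate an explicit parametric solution and then verify normality of $A_{ext}$ by direct substitution.

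Finally, I would dispose of the degenerate situations, when one of $a,b,c$ vanishes or two of them share the same modulus, separately: these either reduce to a smaller superdiagonal matrix or make the two bounds in \eqref{bounds} coincide, so that $\nd(A)=\varepsilon(A)$ follows with no new work. For the real statement, the observation that when $a,b,c$ are real the entire construction can be carried out with real orthogonal $U,V$ and a real $D$ shows that the completion may be taken real.
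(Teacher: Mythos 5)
Your reduction to nonnegative real entries, the sign analysis of $[A,A^*]$, the disposal of the strictly monotone case via the upper bound in (\ref{bounds}), and the block equations for a $6\times6$ extension all agree with the paper. The genuine gap is that in the only hard regime --- $\varepsilon(A)=2$ with neither bound in (\ref{bounds}) deciding the answer --- your argument stops exactly where the work begins: you correctly observe that the coordinate-aligned rank-two splitting of $A^*A-AA^*$ fails to satisfy the off-diagonal block equation, propose to vary the splitting, and then state that you ``expect to locate an explicit parametric solution.'' No completion is exhibited and no existence argument is supplied, so $\nd(A)\le 2$ is never proved. The paper's proof consists precisely of the missing content: an explicit normal $6\times6$ extension (\ref{b_largest}) when $|b|\ge|a|,|c|$, and, when $|b|\le|a|,|c|$, a preliminary lemma showing that at least one of $c^2>a^4/(2a^2-b^2)$ or $a^2>c^4/(2c^2-b^2)$ must hold, followed by two structurally different extensions (\ref{c_largest}) and (\ref{a_largest}), each usable only under the corresponding inequality because the quantity $\beta$ appearing under the square roots must be positive. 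That a case split with distinct zero patterns is unavoidable is a warning that the single uniform parametric family you anticipate does not exist; the nonlinear system genuinely has to be solved, not merely set up.

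A secondary error: your claim that every configuration in which two of $|a|,|b|,|c|$ share a modulus ``either reduces to a smaller superdiagonal matrix or makes the two bounds in (\ref{bounds}) coincide'' is false. For $|a|>|b|=|c|>0$ (say $a=2$, $b=c=1$) one has $\varepsilon(A)=2$ but $\rank(\norm{A}^2I_4-A^*A)=3$, and $A$ does not decouple into smaller blocks, so this case still requires an explicit $6\times6$ completion (it falls under the paper's case (vii)). Since you also exclude it from your main case ($|b|$ is not \emph{strictly} smallest there), your plan leaves it unproved.
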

    
\begin{proof}
We will assume without loss of generality that $a,b,c\in\mathbb{R}$.  If this is not the case, then we are free to make the following transformation.  First, write the complex numbers $a$, $b$, and $c$ in polar form: $a=|a|e^{i\theta_1}$, $b=|b|e^{i\theta_2}$, and $c=|c|e^{i\theta_3}$.  Then, using the fact that $\nd(A)=\nd(UAU^*)$ for any unitary matrix $U$, define $\tilde{A}=UAU^*$ by taking 
\begin{equation*}
U = \begin{pmatrix}
e^{-i(\theta_1+\theta_2+\theta_3)} & 0 & 0 & 0 \\
0 & e^{-i(\theta_2+\theta_3)} & 0 & 0 \\
0 & 0 & e^{-i\theta_3} & 0 \\
0 & 0 & 0 & 1
\end{pmatrix}.
\end{equation*} 
This gives 
\begin{equation*}
\tilde{A} = \begin{pmatrix}
0 & |a| & 0 & 0 \\
0 & 0 & |b| & 0 \\
0 & 0 & 0 & |c| \\
0 & 0 & 0 & 0
\end{pmatrix} 
\end{equation*} whose entries are real numbers.  Hence we can assume that $A$ is real.

The possible values of $\varepsilon(A)$ are 0, 1, 2, and 3.  In each case, the diagonal structure of the commutator matrix $[A,A^\ast]$ puts constraints on the absolute values of $a$, $b$, and $c$.  We will show that $\nd(A)=\varepsilon(A)$ by considering each case separately.  

First, suppose that $\varepsilon(A)=0$.  This can only happen when the commutator  
\begin{equation}  \label{commutator}
[A,A^\ast] = \begin{pmatrix}
|a|^2 & 0 & 0 & 0 \\
0 & |b|^2-|a|^2 & 0 & 0 \\
0 & 0 & -(|b|^2-|c|^2) & 0 \\
0 & 0 & 0 & -|c|^2 
\end{pmatrix}
\end{equation} is the zero matrix.  Thus we must have that $|a| = |b| = |c| = 0$, i.e., $A$ is the zero matrix.  The zero matrix is normal and so $\nd(A) = \varepsilon(A) = 0$.

Next, suppose that $\varepsilon(A)=1$.  By again examining the commutator in (\ref{commutator}), we see that this can only happen under any one of the following conditions:
\begin{enumerate}[(i)]
\item $|a|=|b|=|c| \neq 0$
\item $|a|=0$ and $|b|=|c|\neq0$.
\item $|c| = 0$ and $|b|=|a|\neq0$.
\item Only one of $|a|$, $|b|$, $|c|$ is nonzero.
\end{enumerate}
By 
(\ref{bounds}), we must have that $\nd(A) \geq \varepsilon(A) = 1$.  But in fact, $\nd(A)=\varepsilon(A)=1$ since it is possible to construct normal completion matrices of size $5 \times 5$ in each case.  If $|a| = |b|=|c|$, for example, then 
\begin{equation*}
\left( \begin{array}{cccc|c}
0 & a & 0 & 0 & 0 \\
0 & 0 & b & 0 & 0 \\
0 & 0 & 0 & c & 0 \\
0 & 0 & 0 & 0 & d \\ \hline
f & 0 & 0 & 0 & 0 
\end{array} \right)
\end{equation*} is normal with $|d|=|f|=|b|$.  If instead we had $|a|=0$ and $|b|=|c|\neq0$, then
\begin{equation*}
\left( \begin{array}{cccc|c}
0 & 0 & 0 & 0 & 0 \\
0 & 0 & b & 0 & 0 \\
0 & 0 & 0 & c & 0 \\
0 & 0 & 0 & 0 & d \\ \hline
0 & f & 0 & 0 & 0 
\end{array} \right)
\end{equation*}
is normal with $d$ and $f$ as before.  If $|c|=0$ and $|b|=|a|\neq0$, then 
\begin{equation*}
\left( \begin{array}{cccc|c}
0 & a & 0 & 0 & 0 \\
0 & 0 & b & 0 & 0 \\
0 & 0 & 0 & 0 & d \\
0 & 0 & 0 & 0 & 0 \\ \hline
f & 0 & 0 & 0 & 0 
\end{array} \right)
\end{equation*} is normal with $d$ and $f$ as before.  Finally, in each of the three cases where only one of $|a|$, $|b|$, or $|c|$ is nonzero, 
\begin{equation*}
\left( \begin{array}{cccc|c}
0 & a & 0 & 0 & 0 \\
0 & 0 & 0 & 0 & d \\
0 & 0 & 0 & 0 & 0 \\
0 & 0 & 0 & 0 & 0 \\ \hline
f & 0 & 0 & 0 & 0 
\end{array} \right),
\,\,\,\,
\left( \begin{array}{cccc|c}
0 & 0 & 0 & 0 & 0 \\
0 & 0 & b & 0 & 0 \\
0 & 0 & 0 & 0 & d \\
0 & 0 & 0 & 0 & 0 \\ \hline
0 & f & 0 & 0 & 0 
\end{array} \right),
\,\,\,\,
\left( \begin{array}{cccc|c}
0 & 0 & 0 & 0 & 0 \\
0 & 0 & 0 & 0 & 0 \\
0 & 0 & 0 & c & 0 \\
0 & 0 & 0 & 0 & d \\ \hline
0 & 0 & f & 0 & 0 
\end{array} \right),
\end{equation*}
are each normal, with $|d|=|f|=|a|$, $|d|=|f|=|b|$, and $|d|=|f|=|c|$, respectively.  Note that in each case, the minimal normal completion can be chosen to be real.

Now suppose that $\varepsilon(A)=2$.  By (\ref{commutator}), this can only be true under any one of the following conditions:
\begin{enumerate}[(i)]
\setcounter{enumi}{4}
\item $|b|\geq|a|$ and $|b|\geq|c|$ with both $|a|$ and $|c|$ nonzero and at least one inequality strict.
\item $|b| \geq |a|$ and $|b| \geq |c|$ with exactly one of $|a|$ or $|c|$ equal to zero and both inequalities strict.
\item $|b| \leq |a|$ and $|b| \leq |c|$ with both $|a|$ and $|c|$ nonzero and at least one inequality strict.
\end{enumerate}
The stipulation on the number of strict inequalities is necessary in order to avoid including a few of the conditions found previously for $\varepsilon(A)=1$.  

To show that $\nd(A) = \varepsilon(A)=2$, we construct normal completion matrices of size $6 \times 6$.  Define $A_{ext}\in \mathbb{R}^{6\times6}$ by
    \begin{equation*}
    A_{ext} = \begin{pmatrix}
    A & V \\
    W & Z
    \end{pmatrix}
    \end{equation*}
  with extension matrices $V=(v_{ij}) \in \mathbb{R}^{4\times2}$, $W=(w_{ij})\in\mathbb{R}^{2\times 4}$, and $Z=(z_{ij}) \in \mathbb{R}^{2\times2}$.  If $a$, $b$, and $c$ satisfy the conditions given in (v) or (vi), then $A_{ext}$ can be made to be normal by taking the elements of $V$, $W$, and $Z$ as follows:
  \begin{equation*}
  v_{12} = -a \sqrt{ \frac{(b^2-a^2)(b^2-c^2)}{(bc)^2 + (ba)^2 - (ac)^2}},\,\,\,\,
  v_{31} = -\sqrt{b^2-c^2},
  \end{equation*}
  \begin{equation*}
  v_{42} = \frac{-cb^2 }{\sqrt{(bc)^2 + (ba)^2 - (ac)^2}},\,\,\,\,
  w_{12} = \sqrt{b^2-a^2},
  \end{equation*}
  \begin{equation*}
  w_{21} = \frac{ab^2}{\sqrt{(bc)^2 + (ba)^2 - (ac)^2}},\,\,\,\,
  w_{24} = c \sqrt{ \frac{(b^2-a^2)(b^2-c^2)}{(bc)^2 + (ba)^2 - (ac)^2}},
  \end{equation*}
  \begin{equation*}
  z_{12} = a^2 \sqrt{\frac{b^2-c^2}{(bc)^2 + (ba)^2 - (ac)^2}},\,\,\,\,
  z_{21} = c^2 \sqrt{\frac{b^2-a^2}{(bc)^2 + (ba)^2 - (ac)^2}},
  \end{equation*}
  with all other elements not listed above set to zero.  Thus in this case $A_{ext}$ has the form
  \begin{equation}\label{b_largest}
  A_{ext} = \left( \begin{array}{cccc|cc}
0 & a & 0 & 0 & 0 & v_{12} \\
0 & 0 & b & 0 & 0 & 0 \\
0 & 0 & 0 & c & v_{31} & 0 \\
0 & 0 & 0 & 0 & 0 & v_{42} \\ \hline
0 & w_{12} & 0 & 0 & 0 & z_{12} \\
w_{21} & 0 & 0 & w_{24} & z_{21} & 0 
\end{array} \right).
\end{equation}
The conditions in (v) and (vi) guarantee that each element in $A_{ext}$ is real.  Although it is a somewhat tedious algebraic exercise, it can easily be verified that $A_{ext}$ is normal (for details, see the Appendix).

Now suppose instead that $a$, $b$, and $c$ satisfy the conditions given in (vii).  In this case there are actually two additional possibilities.  It must be true that either 
\begin{equation}\label{vii.a,vii.b}
\textnormal{(vii.a)\,\,\,\,}
c^2 > \frac{a^4}{2a^2-b^2}
\textnormal{\,\,\,\,\,\,\,or\,\,\,\,\,\,\,}
\textnormal{(vii.b)\,\,\,\,}
a^2 > \frac{c^4}{2c^2-b^2}
\end{equation}
since the falseness of one implies that the other is true.  To see this, assume (vii.a) is false.  Then, using $|b| \leq |a|$,
\begin{equation}\label{c leq a}
c^2 \leq \frac{a^4}{2a^2-b^2} \leq \frac{a^4}{2a^2-a^2} = a^2
\end{equation}
and since $|b| \leq |c|$, we have that
\begin{equation*}
c^2(c^2-b^2) \geq 0 \,\,\,\, \Rightarrow \,\,\,\, c^4-(cb)^2 = 2c^4 - (cb)^2 - c^4 = c^2(2c^2-b^2)-c^4\geq 0
\end{equation*}
\begin{equation} \label{a leq c}
\Rightarrow \,\,\,\, c^2 \geq \frac{c^4}{2c^2-b^2}.
\end{equation}
But $a^2 \geq c^2$ by (\ref{c leq a}), and so 
\begin{equation*}
a^2 > \frac{c^4}{2c^2-b^2}.
\end{equation*}
This last inequality is strict because one of the inequalities in (\ref{c leq a}) or (\ref{a leq c}) must be strict since we are not considering the case $|a|=|b|=|c|$. The other direction is equivalent.  Note that the denominators in both conditions cannot equal 0 since $|b| \leq |a|$, $|b| \leq |c|$, and $a$, $c \neq 0$.

If $a$, $b$, and $c$ satisfy the conditions given in (vii) and (vii.a), then $A_{ext}$ can be made to be normal by taking
\begin{equation*}
v_{12} = \frac{a(c^2-a^2)}{\sqrt{\beta}},\,\,\,\,
v_{22}=c\sqrt{\frac{(a^2-b^2)(c^2-b^2)}{\beta}},\,\,\,\,
v_{41}=c,
\end{equation*}
\begin{equation*}
w_{11}=-\frac{bc}{a},\,\,\,\,
w_{12}=\frac{b(a^2-c^2)}{a^2} \sqrt{\frac{a^2-b^2}{c^2-b^2}},\,\,\,\,
w_{13}=\frac{c(b^2-a^2)}{a^2},
\end{equation*}
\begin{equation*}
w_{21}=\frac{c^2(b^2-a^2)}{a\sqrt{\beta}},\,\,\,\,
w_{22}=\frac{c(a^2-b^2)(a^2-c^2)}{a^2} \sqrt{\frac{a^2-b^2}{\beta(c^2-b^2)}},\,\,\,\,
w_{23}=\frac{b\sqrt{\beta}}{a^2},
\end{equation*}
\begin{equation*}
z_{12}=\frac{b}{a^2} \sqrt{\frac{\beta(a^2-b^2)}{c^2-b^2}},\,\,\,\,
z_{22}=-\frac{c}{\beta a^2} (a^6+3(abc)^2-c^2b^4-c^2a^4-2b^2a^4) \sqrt{\frac{a^2-b^2}{c^2-b^2}}
\end{equation*}
where $\beta= 2(ac)^2-(bc)^2-a^4$ and with all elements not listed set to zero.  Then $A_{ext}$ has the form
\begin{equation}\label{c_largest}
  A_{ext} = \left( \begin{array}{cccc|cc}
0 & a & 0 & 0 & 0 & v_{12} \\
0 & 0 & b & 0 & 0 & v_{22} \\
0 & 0 & 0 & c & 0 & 0 \\
0 & 0 & 0 & 0 & v_{41} & 0 \\ \hline
w_{11} & w_{12} & w_{13} & 0 & 0 & z_{12} \\
w_{21} & w_{22} & w_{23} & 0 & 0 & z_{22} 
\end{array} \right).
\end{equation}

If instead $a$, $b$, and $c$ satisfy the conditions given in (vii) and (vii.b), then $A_{ext}$ can be made to be normal by taking
\begin{equation*}
v_{21}=\frac{b\sqrt{\beta}}{c^2},\,\,\,\,
v_{22}=\frac{a(b^2-c^2)}{c^2},\,\,\,\,
v_{31}=\frac{a(c^2-b^2)(c^2-a^2)}{c^2} \sqrt{\frac{c^2-b^2}{\beta(a^2-b^2)}},
\end{equation*}
\begin{equation*}
v_{32}=\frac{b(c^2-a^2)}{c^2} \sqrt{\frac{c^2-b^2}{a^2-b^2}},\,\,\,\,
v_{41}=\frac{a^2(b^2-c^2)}{c\sqrt{\beta}},\,\,\,\,
v_{42}=-\frac{ab}{c},
\end{equation*}
\begin{equation*}
w_{13}=a\sqrt{\frac{(a^2-b^2)(c^2-b^2)}{\beta}},\,\,\,\,
w_{14} = \frac{c(a^2-c^2)}{\sqrt{\beta}},\,\,\,\,
w_{21}=a,
\end{equation*}
\begin{equation*}
z_{11}=-\frac{a}{\beta c^2} (c^6+3(abc)^2-a^2b^4-a^2c^4-2b^2c^4) \sqrt{\frac{c^2-b^2}{a^2-b^2}},\,\,\,\,
z_{12}=\frac{b}{c^2} \sqrt{\frac{\beta(c^2-b^2)}{a^2-b^2}}
\end{equation*}
where $\beta= 2(ac)^2-(ab)^2-c^4$ and with all elements not listed set to zero.  Then $A_{ext}$ has the form
\begin{equation}\label{a_largest}
  A_{ext} = \left( \begin{array}{cccc|cc}
0 & a & 0 & 0 & 0 & 0 \\
0 & 0 & b & 0 & v_{21} & v_{22} \\
0 & 0 & 0 & c & v_{31} & v_{32} \\
0 & 0 & 0 & 0 & v_{41} & v_{42} \\ \hline
0 & 0 & w_{13} & w_{14} & z_{11} & z_{12} \\
w_{21} & 0 & 0 & 0 & 0 & 0 
\end{array} \right).
\end{equation}

In either case (\ref{c_largest}) or (\ref{a_largest}), the denominators of the elements of $V$, $W$, and $Z$ are never zero.  The number $\beta$ is never less than or equal to zero because of the conditions in (\ref{vii.a,vii.b}), and $a$ and $c$ were assumed to be nonzero from the start.  The conditions in (vii) guarantee that each element is real.  As was the case for (\ref{b_largest}), it can easily be verified that (\ref{c_largest}) and (\ref{a_largest}) are normal, although the algebra is very tedious.

It is interesting to note that the formulas given for $V$, $W$, and $Z$ in (\ref{c_largest}) and (\ref{a_largest}) are related via the change of variables $a \rightarrow c$, $c \rightarrow a$, and the equations
\begin{equation*}
V_{b}=
\begin{pmatrix}
0 & 0 & 0 & 1 \\
0 & 0 & 1 & 0 \\
0 & 1 & 0 & 0 \\
1 & 0 & 0 & 0
\end{pmatrix} W_{a}^T
\begin{pmatrix}
0 & 1 \\
1 & 0 
\end{pmatrix},
\end{equation*}
\begin{equation*}
W_b=
\begin{pmatrix}
0 & 1 \\
1 & 0 
\end{pmatrix} V_{a}^T
 \begin{pmatrix}
0 & 0 & 0 & 1 \\
0 & 0 & 1 & 0 \\
0 & 1 & 0 & 0 \\
1 & 0 & 0 & 0
\end{pmatrix},
\end{equation*}
\begin{equation*}
Z_b=
\begin{pmatrix}
0 & 1 \\
1 & 0 
\end{pmatrix} Z_a^T
\begin{pmatrix}
0 & 1 \\
1 & 0 
\end{pmatrix},
\end{equation*}
where $V_a$, $W_a$, $Z_a$ and $V_b$, $W_b$, $Z_b$ are the extension matrices in (\ref{c_largest}) and (\ref{a_largest}), respectively.

Finally, suppose that $\varepsilon(A)=3$.  By (\ref{commutator}), the only two possible conditions on $a$, $b$, and $c$ are $0 < |a| < |b| < |c|$ or $|a| > |b| > |c| > 0$.  In either case, the normal defect was previously shown to be 3 in \cite[Proposition 1]{KW} since $\varepsilon(A)=\ud(A)=3$.  Examples of real minimal normal completions of $A$ are given by 
\begin{equation*}
 \left( \begin{array}{cccc|ccc}
0 & a & 0 & 0 & 0 & 0 & -\sqrt{c^2-a^2} \\
0 & 0 & b & 0 & -\sqrt{c^2-b^2} & 0 & 0 \\
0 & 0 & 0 & c & 0 & 0 & 0 \\
0 & 0 & 0 & 0 & 0 & c & 0 \\ \hline
0 & \sqrt{c^2-a^2} & 0 & 0 & 0 & 0 & a \\
c & 0 & 0 & 0 & 0 & 0 & 0 \\
0 & 0 & \sqrt{c^2-b^2} & 0 & b & 0 & 0 
\end{array} \right)
\end{equation*}
for the case $0 < |a| < |b| < |c|$, and 
\begin{equation*}
 \left( \begin{array}{cccc|ccc}
0 & a & 0 & 0 & 0 & 0 & 0 \\
0 & 0 & b & 0 & -\sqrt{a^2-b^2} & 0 & 0 \\
0 & 0 & 0 & c & 0 & -\sqrt{a^2-c^2} & 0 \\
0 & 0 & 0 & 0 & 0 & 0 & a \\ \hline
0 & 0 & 0 & \sqrt{a^2-c^2} & 0 & c & 0 \\
a & 0 & 0 & 0 & 0 & 0 & 0 \\
0 & 0 & \sqrt{a^2-b^2} & 0 & b & 0 & 0 
\end{array} \right)
\end{equation*}
for the case $|a| > |b| > |c| > 0$.

Thus, we have shown that $\nd(A) = \varepsilon(A)$ and that a minimal normal completion of $A$ can be chosen to be real if $A$ is real.

\end{proof}

\begin{ex} \label{ex 0132 matrix}
\rm{
We can now write down a minimal normal completion matrix for (\ref{0132 matrix}) using the details of the proof of Theorem \ref{thm 0abc matrix}.  This matrix satisfies the conditions in case (v) and so we will construct a normal completion matrix according to (\ref{b_largest}).  This gives
\begin{equation*}
\left( \begin{array}{cccc|cc}
0 & 1 & 0 & 0 & 0 & -\frac{2}{41} \sqrt{410} \\
0 & 0 & 3 & 0 & 0 & 0 \\
0 & 0 & 0 & 2 & -\sqrt{5} & 0 \\
0 & 0 & 0 & 0 & 0 & -\frac{18}{41} \sqrt{41} \\ \hline
0 & 2\sqrt{2} & 0 & 0 & 0 & \frac{1}{41}\sqrt{205} \\
\frac{9}{41}\sqrt{41} & 0 & 0 & \frac{4}{41}\sqrt{410} & \frac{8}{41}\sqrt{82} & 0 
\end{array} \right)
\end{equation*}
and so (\ref{0132 matrix}) has normal defect 2.
} 
\end{ex}

In the following three special cases, it can be shown that a minimal normal completion matrix of the $n \times n$ matrices
\begin{equation*}
\begin{pmatrix}
0 & a & \hdots & 0 \\ 
 \vdots   & \vdots   & bI_{n-3} & \vdots \\ 
 0   &  0   &   \hdots      & c \\
0  & 0 & \hdots & 0
\end{pmatrix}, \,\,\,\, |b| \geq |a|, |c|
\end{equation*}
\begin{equation*}
\begin{pmatrix}
0 & a & 0 & \hdots \\ 
 0   & 0   & b & \hdots \\ 
\vdots   &  \vdots   &   \vdots      & cI_{n-3} \\
0  & 0 & 0 & \hdots
\end{pmatrix}, \,\,\,\, |b| \leq |a|, |c| \,\,\,\textnormal{and } |c| \geq |a|
\end{equation*}
\begin{equation*}
\begin{pmatrix}
\vdots & aI_{n-3} & \vdots & \vdots \\ 
0   & \hdots   & b & 0 \\ 
 0   &  \hdots   &   0      & c \\
0  & \hdots & 0 & 0
\end{pmatrix}, \,\,\,\, |b| \leq |a|, |c| \,\,\, \textnormal{and  } |a| \geq |c|
\end{equation*}
follows the exact same form as (\ref{b_largest}), (\ref{c_largest}), and (\ref{a_largest}).  In general, however, it is not clear whether the results of Theorem \ref{thm 0abc matrix} can be extended to $n\times n$ matrices whose only nonzero entries lie on the diagonal.  For example, the $5\times 5$ matrix 
\begin{equation} \label{nd unknown}
A=\begin{pmatrix}
0 & 2 & 0 & 0 & 0 \\
0 & 0 & 1 & 0 & 0 \\
0 & 0 & 0 & 1 & 0 \\
0 & 0 & 0 & 0 & 1 \\
0 & 0 & 0 & 0 & 0
\end{pmatrix}
\end{equation} has $\varepsilon(A)=2$, but the normal defect of $A$ is unknown.  To show that $\nd(A)=2$, it is necessary to assume that a normal completion of size $7 \times 7$ exists and to solve the equations that result.  It is easy to write down a normal completion of size $8\times 8$ (see Example \ref{ex open question}), but to show that $\nd(A) = 3$, it is necessary to find a contradiction in the equations that result from assuming $\nd(A)=2$.  We suspect that $\nd(A)=3$, but in either case it is challenging.

It is known, however, that the equality $\nd(A)=\varepsilon(A)$ in Theorem \ref{thm 0abc matrix} can \textit{not} be extended in general to $n \times n$ matrices of the form 
\begin{equation}\label{nxn n nonzero entries}
A = \begin{pmatrix} 
0 & a_1 &  & \cdots & 0  \\
  &  & a_2 &  &  \\
  \vdots &  & & \ddots & \vdots \\
& & & & a_{n-1} \\
a_n & & & \cdots & 0
    \end{pmatrix},
\end{equation} when $n\geq4$.  An example of this fact was given in \cite[Section 5.9]{BW} for the transpose of the $4\times 4$ matrix 
\begin{equation*}
\begin{pmatrix}
0 & 1 & 0 & 0 \\
0 & 0 & 1 & 0 \\
0 & 0 & 0 & 1 \\
\sqrt{2} & 0 & 0 & 0
\end{pmatrix}
\end{equation*}
which has the property that $\nd(A) > \varepsilon(A)=1$.  Interestingly, this matrix also has the property that both bounds in (\ref{bounds}) are strict.  Indeed, $\rank(\norm A^2 I_n - A^*A)=3$, but $\nd(A)=2$ since a minimal normal completion is given by
\begin{equation*}
\left( \begin{array}{cccc|cc}
0 & 1 & 0 & 0 & 0 & -1 \\
0 & 0 & 1 & 0 & 1 & 0 \\
0 & 0 & 0 & 1 & 0 & 0 \\
\sqrt{2} & 0 & 0 & 0 & 0 & 0 \\ \hline
0 & 0 & 0 & 1 & 0 & 0 \\
0 & 1 & 0 & 0 & 0 & 1
\end{array} \right).
\end{equation*}
The following proposition 
identifies necessary and sufficient conditions on the entries $a_1$, $a_2$, ..., $a_n$ for when $\nd(A)=\varepsilon(A)=1$.  

\begin{prop}\label{prop nd = max}
Let $A \in \mathbb{C}^{n\times n}$ be of the form $(\ref{nxn n nonzero entries})$, with $n\geq4$.  Define the set $A_x=\{k \,(\textnormal{mod }n) : |a_k| = |x|\}$.  Then $\nd(A) = \varepsilon(A) = 1$ if and only if there exist $\alpha, \beta \in \mathbb{R}$ such that $\alpha > \beta \geq 0$ with $|a_k| \in \{\alpha,\beta \}$ for all $k \leq n$ and there exist $i,j \in \mathbb{N}$ such that $A_\beta = \{i,i+1,... ,i+j-1\}$, where
    \begin{enumerate}[(i)]
    \item $1 \leq j \leq 2$ (modulo $n$) if $\beta \neq 0$
    \item $1 \leq j \leq n-1$ (modulo $n$) if $\beta=0$
    \end{enumerate}
\end{prop}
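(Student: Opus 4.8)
The plan is to work directly with the one-step completion equations and to exploit how sparse the commutator is. Since every row and every column of $A$ has at most one nonzero entry, both $AA^\ast$ and $A^\ast A$ are diagonal, and a short computation shows that $[A,A^\ast]$ is the diagonal matrix with entries $d_k=|a_k|^2-|a_{k-1}|^2$ (indices mod $n$, $a_0:=a_n$, using $Ae_p=a_{p-1}e_{p-1}$). Because $\sum_k d_k=0$, the condition $\varepsilon(A)=1$ is equivalent to having exactly one positive and one negative $d_k$, which is in turn equivalent to the magnitude sequence $|a_1|,\dots,|a_n|$ taking exactly two values $\alpha>\beta\ge0$, each occupying a single cyclic block. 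This already yields the two-value / cyclic-interval structure (with $1\le j\le n-1$) asserted in the statement, and it locates the two nonzero diagonal entries: writing $q=i$ for the index where the sequence drops to $\beta$ and $p=i+j$ for the index where it rises to $\alpha$, one has $[A,A^\ast]=\gamma(e_pe_p^\ast-e_qe_q^\ast)$ with $\gamma:=\alpha^2-\beta^2>0$, and crucially $|a_{p-1}|=\beta$. Both directions then reduce to deciding, for this fixed commutator, whether a single-row/column normal completion exists.

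Next I would record the completion equations. Seeking $A_{ext}=\begin{pmatrix}A&v\\ r&z\end{pmatrix}$ normal with $v\in\mathbb{C}^{n\times1}$, $r\in\mathbb{C}^{1\times n}$, $z\in\mathbb{C}$, one finds $A_{ext}A_{ext}^\ast=A_{ext}^\ast A_{ext}$ is equivalent to (I) $r^\ast r-vv^\ast=[A,A^\ast]$, (II) $Ar^\ast+\bar z\,v=A^\ast v+z\,r^\ast$, and (III) $\|r\|=\|v\|$; moreover (III) is automatic once (I) holds, since taking the trace of (I) gives $\|r\|^2-\|v\|^2=\operatorname{tr}[A,A^\ast]=0$. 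The key structural lemma is that \emph{every} solution of (I) has $r^\ast,v\in\operatorname{span}\{e_p,e_q\}$: indeed $[A,A^\ast]$ has rank two with kernel $\operatorname{span}\{e_k:k\neq p,q\}$, while $r^\ast r-vv^\ast$ annihilates every vector orthogonal to both $r^\ast$ and $v$, forcing these two to be linearly independent and to span $\operatorname{span}\{e_p,e_q\}$. Writing $r^\ast=\rho_1e_p+\rho_2e_q$ and $v=\nu_1e_p+\nu_2e_q$, equation (I) becomes the scalar system $|\rho_1|^2-|\nu_1|^2=\gamma$, $|\rho_2|^2-|\nu_2|^2=-\gamma$, $\rho_1\bar\rho_2=\nu_1\bar\nu_2$.

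For the necessity of $j\le2$ when $\beta\neq0$, I would extract a single scalar equation from (II). The coefficient of $e_{p-1}$ on the left-hand side of (II) is $\rho_1 a_{p-1}$, and an index check shows that for $j\ge3$ (and $n\ge4$) the vector $e_{p-1}$ coincides with none of the other basis vectors appearing in (II): the coincidences $p-1\equiv q$ and $p-1\equiv q+1$ occur exactly when $j\equiv1$ and $j\equiv2$, respectively. Hence for $j\ge3$ the $e_{p-1}$-component of (II) reads simply $\rho_1 a_{p-1}=0$; since $|a_{p-1}|=\beta\neq0$ this gives $\rho_1=0$, contradicting $|\rho_1|^2=\gamma+|\nu_1|^2>0$ from (I). Thus no completion of size $n+1$ exists, so $\nd(A)\ge2>\varepsilon(A)$, and $\nd(A)=\varepsilon(A)=1$ forces $j\le2$. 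This isolation of the ``orphaned'' component $e_{p-1}$ is the heart of the argument and the step I expect to require the most care, because the cyclic coincidences for small $n$ (the wraparound cases $j=n-2,\,n-1$) must be verified to fall under the same umbrella; the coincidence conditions above confirm they do.

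For sufficiency I would exhibit completions case by case, guided by which coincidence rescues the orphaned component. When $\beta=0$ the coefficient $a_{p-1}$ is itself zero, so the obstruction vanishes identically and $r^\ast=\alpha e_p$, $v=\alpha e_q$, $z=0$ works for every $1\le j\le n-1$ (the completion is $\alpha$ times a monomial matrix); this is precisely why $\beta=0$ tolerates arbitrarily long blocks. When $\beta\neq0$ and $j=2$ one has $p-1\equiv q+1$, so the two contributions to the single surviving component $e_{p-1}=e_{q+1}$ can be matched: with $z=0$, $r^\ast=\sqrt\gamma\,e_p$, $v=\sqrt\gamma\,\omega\,e_q$, equation (II) collapses to one phase equation $\rho_1 a_{p-1}=\nu_2\overline{a_q}$, solvable because $|a_{p-1}|=|a_q|=\beta$. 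When $\beta\neq0$ and $j=1$ one has $p-1\equiv q$ and $q+1\equiv p$, and the orphaned term is absorbed instead by the scalar $z$ (of modulus $|z|=\beta$), again leaving solvable phase equations. In each case one verifies (I)--(III) and concludes $\nd(A)\le1$; combined with the lower bound $\nd(A)\ge\varepsilon(A)=1$ this gives $\nd(A)=\varepsilon(A)=1$ and completes the equivalence.
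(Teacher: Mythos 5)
Your proof is correct, but it takes a genuinely different route from the paper's. The paper reduces both directions to the characterization of normal defect one in \cite[Theorem 5.9.4]{BW}: for sufficiency it exhibits $x,y$ with $[A,A^*]=xx^*-yy^*$ and checks that $x$, $y$, $Ax$, $A^*y$ are linearly dependent, and for necessity (the case $\beta\neq 0$, $j\geq 3$) it shows that the perturbed family $\{x,y,Ax+tAy,A^*y+\bar t A^*x\}$ stays linearly independent for all $|t|<1$, contradicting part (iii) of that theorem. You avoid the cited theorem entirely: you write down the normality equations (I)--(III) for a one-row bordered extension, prove the structural lemma that any admissible border vectors must span $\operatorname{range}[A,A^*]=\operatorname{span}\{e_p,e_q\}$, and then kill every putative completion by isolating the orphaned coefficient $\rho_1 a_{p-1}$ of $e_{p-1}$; your index coincidences $p-1\equiv q$ (giving $j\equiv 1$) and $p-1\equiv q+1$ (giving $j\equiv 2$) are exactly the paper's list of degenerate cases, just read off a different object. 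What your approach buys is self-containedness (only the elementary bound $\nd(A)\geq\varepsilon(A)$ is imported) and, in the sufficiency direction, explicit completions that are actually more careful than the paper's: the closing formula $\begin{pmatrix}A&y\\x^*&0\end{pmatrix}$ with $x=\sqrt{\alpha^2-\beta^2}\,e_{i+j}$, $y=\sqrt{\alpha^2-\beta^2}\,e_i$ is not normal in general when $\beta\neq 0$ (for $j=1$ one needs the nonzero corner entry of modulus $\beta$ you supply, and for $j=2$ the unimodular phase $\omega=a_{p-1}/\overline{a_q}$), although the paper's conclusion $\nd(A)=1$ still follows from the linear-dependence criterion it invokes. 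What the paper's route buys is brevity and a packaged treatment of the non-uniqueness of the decomposition $[A,A^*]=xx^*-yy^*$, which your structural lemma instead handles by hand.
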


Note that cases (i), (ii), (iii), and (iv) in the beginning of the proof of Theorem \ref{thm 0abc matrix} satisfy the conditions of this proposition with $\beta = 0$.

\begin{proof}
``$\Leftarrow$":  The only nonzero entries of the commutator 
\begin{equation*}
[A,A^*]=\begin{pmatrix}
|a_1|^2-|a_n|^2 &  & \cdots & 0 \\
 & |a_2|^2-|a_1|^2 & & \vdots \\
\vdots & & \ddots & \\
0 & \cdots & & |a_n|^2-|a_{n-1}|^2
\end{pmatrix}
\end{equation*} occur at the positions $(i,i)$ and $(i+j,i+j)$ with values $\beta^2-\alpha^2$ and $\alpha^2-\beta^2$, respectively.  Since $\alpha > \beta$ we get that $\varepsilon(A)=1$.  

Next, take $x = \sqrt{\alpha^2-\beta^2}e_{i+j}$ and $y = \sqrt{\alpha^2-\beta^2}e_i$ (where $e_i$ and $e_{i+j}$ are the standard basis vectors).  These vectors are linearly independent and satisfy the equality $[A,A^*]=xx^*-yy^*$.  To show that $\nd(A)=1$, we must show that $x$, $y$, $Ax$, and $A^*y$ are linearly dependent \cite[Theorem 5.9.4 (ii)]{BW}.

The matrix $A$ has the property that $A(e_k)=a_{k-1}e_{k-1}$ and $A^*(e_k)=a_ke_{k+1}$ for any positive integer $k \leq n$.  Hence, $Ax = a_{i+j-1} \sqrt{\alpha^2-\beta^2} e_{i+j-1}$ and $A^*y = a_{i} \sqrt{\alpha^2-\beta^2} e_{i+1}$.  If $\beta = 0$, then $|a_{i+j-1}|=|a_i|=0$ (since $i, i+j-1 \in A_\beta$ by assumption) and so $Ax=A^*y$.  If $\beta \neq 0$, then either $A^*y = a_i x$ (if $j = 1$) or $A^*y = (a_i / a_{i+j-1}) (Ax)$ (if $j = 2$).  In either case, the vectors $x$, $y$, $Ax$, and $A^*y$ are linearly dependent, and so $\nd(A)=1$.  A minimal normal completion matrix is given by $A_{ext}=\begin{pmatrix}
A & y \\
x^* & 0
\end{pmatrix}$.
\\
\\
``$\Rightarrow$":  Assume $\nd(A) = \varepsilon(A)=1$.  Since $\nd(A)=1$, there exist linearly independent $x,y\in \mathbb{C}^n$ such that $[A,A^*]=xx^*-yy^*$ and $x$, $y$, $Ax$, and $A^*y$ are linearly dependent.  Since $\varepsilon(A)=1$ and $[A,A^*]$ is diagonal, there exist $i,j\in \mathbb{N}$ such that the entries at the positions $(i,i)$ and $(i+j,i+j)$ of the commutator $[A,A^*]$ are nonzero (if only one entry was nonzero, then $x$ and $y$ would be linearly dependent).  Without loss of generality assume that the $(i,i)$ entry is negative and the $(i+j,i+j)$ entry is positive.  The values at these positions are $|a_i|^2-|a_{i-1}|^2$ and $|a_{i+j}|^2-|a_{i+j-1}|^2$, respectively.

Since these are the only two nonzero entries, we must have $|a_i| = |a_{i+1}| = \cdots = |a_{i+j-1}| = \beta$ for some $\beta \in \mathbb{R}$.  All other elements $a_k$ of the matrix $A$ must have the property that $|a_k| = \alpha$ for some $\alpha \in \mathbb{R}$.  Thus, $A_\beta = \{i,i+1,...,i+j-1\}$ and $|a_k| \in \{\alpha,\beta\}$ for all $k \leq n$.  Since the $(i,i)$ entry is negative and the $(i+j,i+j)$ entry is positive, we must have $\alpha > \beta \geq 0$.  In addition, note that we cannot have $j = 0$ (mod n); otherwise, $A$ would be a normal matrix.  Thus, if $\beta=0$, we know that $j$ satisfies the constraint (ii).

If $\beta \neq 0$, take $x = \sqrt{\alpha^2-\beta^2}e_{i+j}$ and $y = \sqrt{\alpha^2-\beta^2}e_i$ so that $[A,A^*]=xx^*-yy^*$ as before.  Consider the set of vectors $\{x,y,Ax+tAy,A^*y+\overline{t}A^*x\}$, where $t$ is a complex number satisfying $|t|<1$.  Ignoring coefficients, this set is equivalent to the set $V = \{e_{i+j},e_i,e_{i+j-1}+te_{i-1}, e_{i+1}+\overline{t}e_{i+j+1}\}$.  Assume for the sake of contradiction that $j \geq 3$.  First, observe that the vectors $\{e_{i+j},e_i,e_{i+j-1},e_{i+1}\}$ must be linearly independent.  Otherwise, at least two of them are equal.  But this would contradict our assumption that $j \geq 3$ since
\begin{equation*}
e_{i+j}=e_{i}\,\,\,\,\Rightarrow \,\,\,\,i+j \equiv i \mod{n} \,\,\,\,\Rightarrow\,\,\,\,j \equiv 0 \mod{n}
\end{equation*}
\begin{equation*}
e_{i+j}=e_{i+1} \,\,\,\, \Rightarrow \,\,\,\, i+j \equiv i+1 \mod{n} \,\,\,\,\Rightarrow\,\,\,\, j \equiv 1 \mod{n}
\end{equation*}
\begin{equation*}
e_{i+j-1}=e_{i} \,\,\,\, \Rightarrow \,\,\,\, i+j-1 \equiv i \mod{n} \,\,\,\, \Rightarrow \,\,\,\, j \equiv 1 \mod{n}
\end{equation*}
\begin{equation*}
e_{i+j-1}=e_{i+1} \,\,\,\,\Rightarrow \,\,\,\,i+j-1 \equiv i+1 \mod{n} \,\,\,\,\Rightarrow\,\,\,\, j \equiv 2 \mod{n}.
\end{equation*}
Although not listed in the equations above, the other two trivial possibilities ($e_{i+j}=e_{i+j-1}$ and $e_i=e_{i+1}$) are clearly impossible.  Thus, these four vectors are linearly independent.

This implies that the vectors in $V$ are also linearly independent, since we can form the matrix \begin{equation*}
M = \begin{pmatrix}
e_{i+j} & e_i & e_{i+j-1}+te_{i-1} & e_{i+1}+\overline{t}e_{i+j+1}
\end{pmatrix}
\end{equation*}
(whose columns are the vectors in $V$) and use row reduction to eliminate the terms $te_{i-1}$ and $\overline{t}e_{i+j+1}$.  Hence, if $j\geq 3$, we have shown that there exists $x,y\in \mathbb{C}^n$ such that $[A,A^*]=xx^*-yy^*$ and that for all $t \in \mathbb{C}$ with $|t| < 1$ the vectors $x$, $y$, $Ax+tAy$, and $A^*y+\overline{t}A^*x$ are linearly independent.  According to \cite[Theorem 5.9.4 (iii)]{BW}, this contradicts the fact that $\nd(A)=1$.  Thus, $j$ must satisfy the constraint (i).

\end{proof}

The usefulness of this proposition lies in its ability to identify cases when a matrix $A$ satisfies $\nd(A) >  \varepsilon(A) = 1$.  The next example illustrates this fact.

\begin{ex} \label{ex nd > max}
\rm {
The $5\times 5$ matrix
\begin{equation*}
A = \begin{pmatrix}
0 & -2 & 0 & 0 & 0 \\
0 & 0 & 1 & 0 & 0 \\
0 & 0 & 0 & -1 & 0 \\
0 & 0 & 0 & 0 & i \\
2 & 0 & 0 & 0 & 0 \\
\end{pmatrix}
\end{equation*}
has $\nd(A) > \varepsilon(A)= 1$.  To see this, take $\alpha = 2$ and $\beta = 1$.  Then $A_\beta=\{i,i+1,i+j-1\}$ with $i=2$ and $j=3$.  Since we have $j > 2$, this matrix fails to satisfy the hypotheses of Proposition \ref{prop nd = max}.  Thus, either $\nd(A) \neq 1$ or $\varepsilon(A) \neq 1$.  It is easy to show that this matrix satisfies $\varepsilon(A) = 1$; hence, it must be the case that $\nd(A) > 1$.
}
\end{ex}

\section{Normal defect of block diagonal matrices} \label{block diagonal}

Another question raised in \cite[Section 5.9]{BW} asks whether or not it holds in general that the normal defect of a block diagonal matrix is equal to the sum of the normal defects of each block.  In other words, does the equality 
\begin{equation}\label{block equality}
\nd(\diag(A_i)_{i=1}^m)=\sum_{i=1}^m \nd(A_i)
\end{equation} hold for all square matrices $A_i$?  The answer to this question is no, as the following example shows.

\begin{ex}\label{ex block}
\rm{
Consider the $6\times6$ block diagonal matrix $A = \begin{pmatrix} 
A_1 &  \\
        & A_2 \\
    \end{pmatrix}$ with $A_1=  \begin{pmatrix} 
0 & 1 & 0 \\
0 & 0 & 2  \\
0 & 0 & 0  \\
    \end{pmatrix}$ 
and $A_2=A_1^T$.  In this case, $\nd(A_1)+\nd(A_2)=2+2=4$ since $[A_1,A_1^*]=\begin{pmatrix} 
1 & 0 & 0 \\
0 & 3 & 0  \\
0 & 0 & -4  \\
    \end{pmatrix}$ and $[A_2,A_2^*]=\begin{pmatrix} 
-1 & 0 & 0 \\
0 & -3 & 0  \\
0 & 0 & 4  \\
    \end{pmatrix}$ and $\nd(B)=\varepsilon(B)$ for any $3\times3$ matrix $B$ \cite[Corollary 5.9.7]{BW}.  But $\nd(A)=3$ since it is possible to construct a normal completion matrix of size $9\times9$.  For example,
\begin{equation*}
\left( \begin{array}{ccc|ccc|ccc}
0 & 1 & 0 & 0 & 0 & 0 & 0 & 0 & -\frac{3}{\sqrt{7}} \\
0 & 0 & 2 & 0 & 0 & 0 & 0 & 0 & 0 \\
0 & 0 & 0 & 0 & 0 & 0 & 0 & 2 & 0 \\ \hline
0 & 0 & 0 & 0 & 0 & 0 & 0 & 0 & -\frac{4}{\sqrt{7}} \\
0 & 0 & 0 & 1 & 0 & 0 & -\sqrt{3} & 0 & 0 \\
0 & 0 & 0 & 0 & 2 & 0 & 0 & 0 & 0 \\ \hline
0 & \sqrt{3} & 0 & 0 & 0 & 0 & 0 & 0 & \sqrt{\frac{3}{7}} \\
\frac{4}{\sqrt{7}} & 0 & 0 & \frac{3}{\sqrt{7}} & 0 & 0 & \sqrt{\frac{3}{7}} & 0 & 0 \\
0 & 0 & 0 & 0 & 0 & 2 & 0 & 0 & 0 
\end{array} \right)
\end{equation*}
is normal.  Thus $\nd(A)<\nd(A_1)+\nd(A_2)$.  
}
\end{ex}

To understand the reason for this inequality, observe that the commutators of $A_1$ and $A_2$ have a different number of positive and negative eigenvalues.  Specifically, $i_+[A_1,A_1^*] > i_-[A_1,A_1^*]$ but $i_+[A_2,A_2^*]<i_-[A_2,A_2^*]$.  We can use this example to come up with sufficient conditions for when the equality in (\ref{block equality}) holds.  Recall from (\ref{epsilon definition}) that $\varepsilon(A):=\max{ \{i_+[A,A^\ast], i_-[A,A^\ast]\}}$.

\begin{prop}\label{prop block}
Let $A=\diag(A_i)_{i=1}^m$ be a block diagonal matrix with each $A_i \in \mathbb{C}^{n_i \times n_i}$ and each $n_i\in \mathbb{N}$.  Then $\nd(A) = \sum_{i=1}^m \nd(A_i)=\varepsilon(A)$ if and only if $\nd(A_i) = \varepsilon(A_i)$ and either $i_+[A_i,A_i^*] \geq i_-[A_i,A_i^*]$ or $i_+[A_i,A_i^*] \leq i_-[A_i,A_i^*]$ for each $i \leq m$.
\end{prop}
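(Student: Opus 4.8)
The plan is to reduce everything to two standard bounds on the normal defect together with one combinatorial observation about the $\max$ function. Write $p_i := i_+[A_i,A_i^*]$ and $q_i := i_-[A_i,A_i^*]$, so that $\varepsilon(A_i) = \max\{p_i,q_i\}$. Since $A$ is block diagonal, so is $[A,A^*]=\diag([A_i,A_i^*])_{i=1}^m$; as a Hermitian block-diagonal matrix its inertia is additive over the blocks, giving $i_+[A,A^*]=\sum_i p_i$ and $i_-[A,A^*]=\sum_i q_i$, hence $\varepsilon(A)=\max\{\sum_i p_i,\sum_i q_i\}$. I would then invoke two facts: the general lower bound $\nd(B)\geq\varepsilon(B)$ from (\ref{bounds}), applied both to $A$ and to each $A_i$; and the subadditivity $\nd(A)\leq\sum_i\nd(A_i)$, which holds because the direct sum of minimal normal completions of the $A_i$ is, after a permutation similarity gathering the $A_i$ into the leading principal block, a normal completion of $A$ of size $\sum_i(n_i+\nd(A_i))$.

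The combinatorial heart of the argument is the inequality
\begin{equation*}
\sum_{i=1}^m \max\{p_i,q_i\} \;\geq\; \max\Big\{\sum_i p_i,\ \sum_i q_i\Big\},
\end{equation*}
which always holds since the left side dominates both $\sum_i p_i$ and $\sum_i q_i$, together with the characterization of its equality case: equality holds if and only if every term attains its maximum on the same side, i.e.\ either $p_i\geq q_i$ for all $i$ or $q_i\geq p_i$ for all $i$. This is precisely the hypothesis of the proposition, and it is the reason the ``opposite directions'' configuration of Example~\ref{ex block} destroys the equality.

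For the direction ``$\Leftarrow$'' I would assume $\nd(A_i)=\varepsilon(A_i)$ for each $i$ and, without loss of generality (the other case being symmetric), that $p_i\geq q_i$ for all $i$. Then $\sum_i\nd(A_i)=\sum_i\max\{p_i,q_i\}=\sum_i p_i$, and since $p_i\geq q_i$ termwise forces $\sum_i p_i\geq\sum_i q_i$, also $\varepsilon(A)=\sum_i p_i$. Chaining the bounds, $\varepsilon(A)\leq\nd(A)\leq\sum_i\nd(A_i)=\varepsilon(A)$, collapses everything to equality and yields the triple equality. For ``$\Rightarrow$'' I would assume $\sum_i\nd(A_i)=\varepsilon(A)$ (the remaining equality $\nd(A)=\varepsilon(A)$ then follows from the same chain of bounds) and again take, without loss of generality, $\sum_i p_i\geq\sum_i q_i$, so $\varepsilon(A)=\sum_i p_i$. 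Since $\nd(A_i)\geq\varepsilon(A_i)=\max\{p_i,q_i\}\geq p_i$ for each $i$, summing gives $\sum_i\nd(A_i)\geq\sum_i\max\{p_i,q_i\}\geq\sum_i p_i=\varepsilon(A)$; equality of the outer terms forces equality throughout, so $\max\{p_i,q_i\}=p_i$ (hence $p_i\geq q_i$) and $\nd(A_i)=\max\{p_i,q_i\}=\varepsilon(A_i)$ for every $i$, which is exactly both required conclusions.

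I do not expect a serious obstacle here: the proof is a squeeze between the universal lower bound and the subadditive upper bound, with the equality case of $\sum\max\geq\max\sum$ doing the essential combinatorial work. The only points requiring care are bookkeeping the two ``without loss of generality'' reductions consistently and being explicit that the termwise inequalities $p_i\geq q_i$ are what make the $\max$ factor through the sum; conversely a single block with the opposite inertia sign introduces slack in the chain and breaks the equality, exactly as witnessed by Example~\ref{ex block}.
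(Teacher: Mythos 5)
Your proof is correct and follows essentially the same route as the paper: a squeeze between the lower bound $\varepsilon(A)\leq\nd(A)$, the subadditive upper bound $\nd(A)\leq\sum_i\nd(A_i)$, and the observation that $\varepsilon(A)\leq\sum_i\varepsilon(A_i)$ with equality exactly when all blocks have inertia skewed the same way. The only difference is presentational: you make the subadditivity construction and the equality case of $\sum_i\max\{p_i,q_i\}\geq\max\{\sum_i p_i,\sum_i q_i\}$ explicit, whereas the paper invokes them without elaboration.
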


\begin{proof}
``$\Leftarrow$":  For each $i \leq m$, assume that $\nd(A_i) = \varepsilon(A_i)$, and without loss of generality assume that $i_+[A_i,A_i^*] \geq i_-[A_i,A_i^*]$.  Observe that 
\begin{equation*}
[A,A^*] = \begin{pmatrix} 
[A_1,A_1^*] & & \\
& \ddots &  \\
&  & [A_m,A_m^*]  \\
    \end{pmatrix},
\end{equation*}  and so the eigenvalues of $[A,A^*]$ are the eigenvalues of each block $[A_i,A_i^*]$.  Since $i_+[A_i,A_i^*] \geq i_-[A_i,A_i^*]$, we get that $\varepsilon(A)=\sum_{i=1}^m \varepsilon(A_i)$.  In addition, $\nd(A_i)=\varepsilon(A_i)$ implies that $\sum_{i=1}^m \nd(A_i) = \sum_{i=1}^m \varepsilon(A_i)$.  Thus:
\begin{equation*}
\sum_{i=1}^m \varepsilon(A_i) = \varepsilon(A) \leq \nd(A) \leq \sum_{i=1}^m \nd(A_i) = \sum_{i=1}^m \varepsilon(A_i),
\end{equation*} and so we have $\nd(A) = \sum_{i=1}^m \nd(A_i) = \varepsilon(A)$ as required.
\\
\\
``$\Rightarrow$":  Assume that $\nd(A) = \sum_{i=1}^m \nd(A_i) = \varepsilon(A)$.  In general it must be true that $\varepsilon(A) \leq \sum_{i=1}^m \varepsilon(A_i)$.  But it is also the case that $\nd(A_i) \geq \varepsilon(A_i)$ for each $i \leq k$.  Thus:
\begin{equation}\label{nd inequalities}
\sum_{i=1}^m \varepsilon(A_i) \leq \sum_{i=1}^m \nd(A_i) = \nd(A) = \varepsilon(A) \leq \sum_{i=1}^m \varepsilon(A_i).
\end{equation}
This implies that $\varepsilon(A)=\sum_{i=1}^m \varepsilon(A_i)$, which can only be possible when either $i_+[A_i,A_i^*] \geq i_-[A_i,A_i^*]$ or $i_+[A_i,A_i^*] \leq i_-[A_i,A_i^*]$, for each $i \leq m$..  Additionally, (\ref{nd inequalities}) implies that $\sum_{i=1}^m \nd(A_i) = \sum_{i=1}^m \varepsilon(A_i)$, which can only be possible when $\nd(A_i)=\varepsilon(A_i)$ since $\nd(A_i)\geq \varepsilon(A_i)$ in general.

\end{proof}

An immediate consequence of Proposition \ref{prop block} is presented in the following corollary:

\begin{cor}
Let $A=\diag(A_i)_{i=1}^m$ be a block diagonal matrix consisting of at least $m-1$ blocks of size $2\times2$ and at most $1$ block of size $3\times3$.  Then $\nd(A) = \sum_{i=1}^m \nd(A_i)$.
\end{cor}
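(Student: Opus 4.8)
The plan is to reduce the statement entirely to Proposition \ref{prop block}. That proposition says the desired equality $\nd(A)=\sum_{i=1}^m \nd(A_i)$ (with the common value $\varepsilon(A)$) holds exactly when two conditions are met: (a) each block satisfies $\nd(A_i)=\varepsilon(A_i)$, and (b) the commutators are \emph{uniformly signed}, meaning either $i_+[A_i,A_i^*]\geq i_-[A_i,A_i^*]$ for every $i$ or $i_+[A_i,A_i^*]\leq i_-[A_i,A_i^*]$ for every $i$. So the whole task is to show that the hypothesis of the corollary --- at most one $3\times 3$ block and all remaining blocks $2\times 2$ --- forces both (a) and (b). No explicit completions need to be built; everything rides on inertia bookkeeping.

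For condition (a), I would treat the two block sizes separately. The single $3\times 3$ block, if present, satisfies $\nd=\varepsilon$ by \cite[Corollary 5.9.7]{BW}, the same fact already invoked in Example \ref{ex block}. For a $2\times 2$ block $B$ I would argue directly from the criterion for normal defect one used in Proposition \ref{prop nd = max}: since $\operatorname{tr}[B,B^*]=0$ always, the Hermitian matrix $[B,B^*]$ has eigenvalues $\pm\lambda$ for some $\lambda\geq 0$, so $B$ is either normal (giving $\nd(B)=\varepsilon(B)=0$) or has $i_+=i_-=1$ and $\varepsilon(B)=1$. In the latter case I would choose linearly independent $x,y\in\mathbb{C}^2$ with $[B,B^*]=xx^*-yy^*$; because $x,y$ already span $\mathbb{C}^2$, the four vectors $x,y,Bx,B^*y$ are automatically linearly dependent, and \cite[Theorem 5.9.4 (ii)]{BW} then yields $\nd(B)=1=\varepsilon(B)$. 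This establishes (a) for every block.

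For condition (b) I would again exploit the tracelessness of $[B,B^*]$ for a $2\times 2$ block $B$: it gives $i_+[B,B^*]=i_-[B,B^*]$ (both $0$ or both $1$), so every $2\times 2$ block satisfies \emph{both} inequality directions simultaneously. Consequently the only block that can break the required uniformity is the lone $3\times 3$ block. Whichever way its inertia leans, all the $2\times 2$ blocks agree with that choice, so a uniform sign is achieved across the whole family; and if there is no $3\times 3$ block, uniformity is immediate. This is precisely the feature that fails in Example \ref{ex block}, where two $3\times 3$ blocks with oppositely signed commutators destroy uniformity --- the $3\times 3$-count restriction in the corollary is exactly what rules this out.

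With (a) and (b) verified, Proposition \ref{prop block} delivers $\nd(A)=\sum_{i=1}^m\nd(A_i)$ at once. The only portion requiring genuine (though brief) work is the $2\times 2$ instance of (a); I expect the main obstacle to be confirming that the normal-defect-one characterization applies cleanly there, in particular that the chosen $x,y$ are linearly independent and that no degenerate case is overlooked. Everything else is a short inertia count that the hypotheses make automatic.
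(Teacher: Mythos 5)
Your proposal is correct and follows exactly the paper's route: the paper's entire proof is the one-line observation that such a matrix satisfies the hypotheses of Proposition \ref{prop block}, and your argument simply supplies the (correct) inertia bookkeeping and the $2\times 2$ normal-defect computation that make that observation true.
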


\begin{proof}
Any matrix with this form satisfies the hypotheses of Proposition \ref{prop block}.
\end{proof}

The conditions given in Proposition \ref{prop block} are necessary and sufficient conditions for the equality $\nd(A) = \sum_{i=1}^m \nd(A_i) = \varepsilon(A)$ to hold (with $A$ being block diagonal).  However, it should be emphasized that these are only sufficient conditions for the equality $\nd(A)=\sum_{i=1}^m \nd(A_i)$ (by itself) to hold.  Assuming that $\nd(A_i) = \varepsilon(A_i)$ and either $i_+[A_i,A_i^*] \geq i_-[A_i,A_i^*]$ or $i_+[A_i,A_i^*] \leq i_-[A_i,A_i^*]$ for each $i \leq m$ is enough guarantee that $\nd(A) = \sum_{i=1}^m \nd(A_i)$, but it has the additional consequence that $\nd(A)=\varepsilon(A)$.  One could ask whether there exist matrices satisfying $\nd(A) = \sum_{i=1}^m \nd(A_i) \neq \varepsilon(A)$.  This can certainly happen (trivially) when one or more of the blocks are normal.  But the question remains whether or not there exist non-trivial examples of block diagonal matrices satisfying $\nd(A) = \sum_{i=1}^m \nd(A_i) \neq \varepsilon(A)$.  A possible candidate is 
\begin{equation*}
\left( \begin{array}{cc|cccc}
0 & 1 & 0 & 0 & 0 & 0  \\
0 & 0 & 0 & 0 & 0 & 0  \\ \hline
0 & 0 & 0 & 1 & 0 & 0  \\
0 & 0 & 0 & 0 & 1 & 0 \\
0 & 0 & 0 & 0 & 0 & 1  \\
0 & 0 & 2 & 0 & 0 & 0 \\ 
\end{array} \right),
\end{equation*} but the normal defect of this matrix is unknown.

\section{Observations and open questions} \label{open questions}

It would be interesting to know if the normal defects of the $n \times n$ and $(n+1) \times (n+1)$ matrices
\begin{equation}\label{two matrices different size}
A = \begin{pmatrix}
0 & a_1 &  & \cdots & 0  \\
  &  & a_2 &  &  \\
  \vdots &  & & \ddots & \vdots \\
& & & & a_{n-1} \\
a_{n} & & & \cdots & 0
\end{pmatrix},\,\,\,\,
\tilde{A} = \begin{pmatrix}
0 & a_1 &  & \cdots & & 0  \\
  &  & a_2 &  &  \\
  \vdots &  & & \ddots & & \vdots \\
& & & & a_{n-1} & \\
 & & &  & & a_{n} \\
 0 & & & \cdots & & 0
\end{pmatrix}
\end{equation} 
are related.  For any integer $n$, it is easy to verify that $\varepsilon(A) \leq \varepsilon(\tilde{A}) + 1$ by comparing the commutator matrices $[A,A^*]$ and $[\tilde{A},\tilde{A}^*]$ (these differ in only two columns).  If $n \leq 3$, we have $\nd(A) = \varepsilon(A)$ and $\nd(\tilde{A}) = \varepsilon(\tilde{A})$, so $\nd(A) \leq \nd(\tilde{A})+1$.  But does this inequality hold in general?
\begin{conj} \label{nd inequality}
Let $A \in \mathbb{C}^{n\times n}$, $\tilde{A}\in \mathbb{C}^{(n+1)\times (n+1)}$ with $n \geq 4$ be defined as in $(\ref{two matrices different size})$.  Then $\nd(\tilde{A}) \leq \nd(A) + 1$.
\end{conj}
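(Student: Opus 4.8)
The plan is to prove the bound constructively: starting from a minimal normal completion of $A$, of size $n+\nd(A)$, I will build a normal completion of $\tilde A$ of size $(n+1)+(\nd(A)+1)$, which gives $\nd(\tilde A)\le \nd(A)+1$. Two preliminary reductions come first. As in the proof of Theorem \ref{thm 0abc matrix}, a diagonal unitary conjugation lets me assume $a_1,\dots,a_n\ge 0$. If $a_n=0$, then $A$ is strictly upper bidiagonal and $\tilde A=\begin{pmatrix} A & 0\\ 0 & 0\end{pmatrix}$, so bordering any normal completion of $A$ by a zero row and a zero column yields (after a permutation) $N\oplus[0]$, a normal completion of $\tilde A$ of the same extension size; hence $\nd(\tilde A)=\nd(A)$ and the bound is trivial. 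So I assume $a_n>0$.

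The key structural observation is a clean relation between the commutators. A direct computation gives
\begin{equation*}
[A,A^*]=\diag\!\big(|a_1|^2-|a_n|^2,\,|a_2|^2-|a_1|^2,\dots,|a_n|^2-|a_{n-1}|^2\big),
\end{equation*}
\begin{equation*}
[\tilde A,\tilde A^*]=\diag\!\big(|a_1|^2,\,|a_2|^2-|a_1|^2,\dots,|a_n|^2-|a_{n-1}|^2,\,-|a_n|^2\big),
\end{equation*}
so that, writing $e_1\in\mathbb{C}^n$,
\begin{equation*}
[\tilde A,\tilde A^*]=\begin{pmatrix} [A,A^*]+a_n^2\,e_1e_1^* & 0 \\ 0 & -a_n^2\end{pmatrix}.
\end{equation*}
Now write a minimal normal completion of $A$ as $\begin{pmatrix} A & B \\ C & D\end{pmatrix}$ with $B\in\mathbb{C}^{n\times k}$, $C\in\mathbb{C}^{k\times n}$, $D\in\mathbb{C}^{k\times k}$, $k=\nd(A)$; its normality forces $C^*C-BB^*=[A,A^*]$. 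I propose the bordered extension data
\begin{equation*}
\tilde B=\begin{pmatrix} B & 0 \\ 0 & a_n\end{pmatrix},\qquad \tilde C=\begin{pmatrix} C & 0 \\ a_n e_1^{\,T} & 0\end{pmatrix},
\end{equation*}
together with a block $\tilde D\in\mathbb{C}^{(k+1)\times(k+1)}$ to be chosen, and set $\tilde N=\begin{pmatrix}\tilde A & \tilde B\\ \tilde C & \tilde D\end{pmatrix}$. With these choices $\tilde C^*\tilde C-\tilde B\tilde B^*=\begin{pmatrix}C^*C-BB^*+a_n^2 e_1e_1^* & 0\\ 0 & -a_n^2\end{pmatrix}=[\tilde A,\tilde A^*]$, which is precisely the leading-block identity $\tilde A\tilde A^*+\tilde B\tilde B^*=\tilde A^*\tilde A+\tilde C^*\tilde C$. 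Thus the Hermitian (diagonal-block) part of normality holds automatically, independently of $\tilde D$.

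The crux, and the step I expect to be the main obstacle, is to choose $\tilde D$ so that the two remaining block equations,
\begin{equation*}
\tilde A\tilde C^*+\tilde B\tilde D^*=\tilde A^*\tilde B+\tilde C^*\tilde D \quad\text{and}\quad \tilde C\tilde C^*+\tilde D\tilde D^*=\tilde B^*\tilde B+\tilde D^*\tilde D,
\end{equation*}
also hold. The first is linear in $\tilde D$; substituting $\tilde A=\begin{pmatrix} A_0 & a_n e_n\\ 0 & 0\end{pmatrix}$ with $A_0=A-a_ne_ne_1^*$ and using the off-diagonal identity $AC^*+BD^*=A^*B+C^*D$ of the original completion, its bottom-row and bottom-column blocks already pin the corner entry of $\tilde D$ to $0$ and its last column to the conjugated $n$-th row of $B$, leaving a linear system for the last row and a correction to the $D$-block. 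The genuine difficulty is that matching the $(1,1)$ block forces compatibility conditions relating the first column of $C$ and the $n$-th row of $B$ (the naive choice that leaves the $D$-block equal to $D$ requires $Ce_1=0$), so one cannot simply reuse $D$. I would resolve this using the remaining freedom, namely a unitary conjugation of the $k$-dimensional extension space, which replaces $(B,C,D)$ by $(BU^*,UC,UDU^*)$, together with the nonuniqueness of minimal completions, to put the completion of $A$ in a form adapted to the cyclic-shift structure, and then solve the linear system explicitly and check the second (quadratic) equation as a compatibility identity. Showing that this system is always consistent is exactly where the special structure of $A$ must be exploited; its delicacy is consistent with the statement being posed only as a conjecture.

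An alternative route worth pursuing in parallel starts from $\tilde A=\begin{pmatrix}A & 0\\0&0\end{pmatrix}+a_n\,\hat e_n(\hat e_{n+1}-\hat e_1)^*$, a rank-one modification of $A\oplus 0$ (with $\hat e_j\in\mathbb{C}^{n+1}$), whose normal defect equals $\nd(A)$; proving that this particular structured rank-one modification raises the normal defect by at most one would give the bound immediately. I would also verify the base dimension $n=4$ directly, comparing the $6\times 6$ completions produced in Theorem \ref{thm 0abc matrix} against the corresponding $\tilde A$, both as a consistency check and as a possible seed for an induction on $n$.
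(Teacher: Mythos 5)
The statement you are attempting is posed in the paper as Conjecture \ref{nd inequality}: the authors give no proof, only the motivating observation (Example \ref{ex open question}) that a minimal normal completion of $A$ of size $(n+k)\times(n+k)$ seems to lift to a structurally similar normal completion of $\tilde{A}$ of size $(n+1+k+1)\times(n+1+k+1)$. Your proposal formalizes exactly that motivation, and your preliminary computations are correct: the commutator relation $[\tilde{A},\tilde{A}^*]=\diag\bigl([A,A^*]+|a_n|^2e_1e_1^*,\ -|a_n|^2\bigr)$ holds, your bordered $\tilde{B},\tilde{C}$ satisfy $\tilde{C}^*\tilde{C}-\tilde{B}\tilde{B}^*=[\tilde{A},\tilde{A}^*]$ automatically, and your analysis of the off-diagonal block equation (corner of $\tilde{D}$ forced to $0$, last column forced to $B^*e_n$, the naive reuse of $D$ requiring $Ce_1=0$) checks out.

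However, this is not a proof, and the gap is exactly where you flag it. The entire content of the conjecture is concentrated in showing that $\tilde{D}$ can always be chosen so that both $\tilde{A}\tilde{C}^*+\tilde{B}\tilde{D}^*=\tilde{A}^*\tilde{B}+\tilde{C}^*\tilde{D}$ and $\tilde{C}\tilde{C}^*+\tilde{D}\tilde{D}^*=\tilde{B}^*\tilde{B}+\tilde{D}^*\tilde{D}$ hold, and no argument is given that this system is consistent. The gauge freedom $(B,C,D)\mapsto(BU^*,UC,UDU^*)$ cannot by itself remove the obstruction, since it replaces $Ce_1$ by $UCe_1$, which has the same norm; so one needs either a genuinely different minimal completion of $A$ or a richer ansatz for $\tilde{B},\tilde{C}$, and the existence of either is precisely the open question. (A small secondary point: in the $a_n=0$ reduction you assert $\nd(\tilde{A})=\nd(A)$, but you only establish $\nd(\tilde{A})\le\nd(A)$; that suffices for the bound, so it is harmless.) Your fallback routes --- the rank-one-perturbation viewpoint and a direct check at $n=4$ seeding an induction --- are plausible research directions but are likewise not carried out. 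In short, the framework is sound and consistent with the paper's evidence, but the statement remains a conjecture and your proposal does not close it.
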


The motivation for this conjecture is that, given a minimal normal completion matrix for $A$ of size $(n+k)\times (n+k)$, it is very easy to find a normal completion matrix of similar structure (not necessarily minimal) for $\tilde{A}$ of size $(n+1+k+1) \times (n+1+k+1)$.  The next example demonstrates this.

\begin{ex} \label{ex open question}
\rm{
The matrix $A = \begin{pmatrix}
0 & 2 & 0 & 0 \\
0 & 0 & 1 & 0 \\
0 & 0 & 0 & 1 \\
1 & 0 & 0 & 0
\end{pmatrix}$ has $\varepsilon(A)=1$.  But according to Proposition \ref{prop nd = max}, $\nd(A) > 1$.  An example of a minimal normal completion matrix of size $6 \times 6$ is
\begin{equation*}
 \left( \begin{array}{cccc|cc}
0 & 2 & 0 & 0 & 0 & 0 \\
0 & 0 & 1 & 0 & \sqrt{3} & 0 \\
0 & 0 & 0 & 1 & 0 & 0 \\
1 & 0 & 0 & 0 & 0 & -\sqrt{3} \\ \hline
0 & 0 & 0 & \sqrt{3} & 0 & 0 \\
\sqrt{3} & 0 & 0 & 0 & 0 & 1 
\end{array} \right),
\end{equation*}
and so we have $\nd(A)=2$.
We can use this to quickly find a normal completion of length $8\times8$ for the matrix $\tilde{A}$:
\begin{equation*}
 \left( \begin{array}{ccccc|ccc}
0 & 2 & 0 & 0 & 0 & 0 & 0 & 0 \\
0 & 0 & 1 & 0 & 0 & \sqrt{3} & 0 & 0 \\
0 & 0 & 0 & 1 & 0 & 0 & 0 & 0 \\
0 & 0 & 0 & 0 & 1 & 0 & -\sqrt{3} & 0 \\ 
0 & 0 & 0 & 0 & 0 & 0 & 0 & 2 \\ \hline
0 & 0 & 0 & \sqrt{3} & 0 & 0 & 0 & 0 \\
0 & 0 & 0 & 0 & \sqrt{3} & 0 & 1 & 0 \\
2 & 0 & 0 & 0 & 0 & 0 & 0 & 0  
\end{array} \right).
\end{equation*}
Thus, $\nd(\tilde{A}) \leq 3$.  Notice the similarity in the structure of the two completion matrices.  Also note that we have $\varepsilon(\tilde{A})=2$.  
}
\end{ex}

In the example just given, the matrix  $A$ had the property that $\nd(A)=\varepsilon(A)+1$.  It was observed earlier (\ref{nd unknown}) that the normal defect of $\tilde{A}$ is unknown.  If $\nd(\tilde{A})=3$, however, then $\tilde{A}$ would also have the property that $\nd(\tilde{A})=\varepsilon(\tilde{A})+1$.  This seems reasonable if indeed the normal defects of $A$ and $\tilde{A}$ are related.  We state this as a general conjecture:
\begin{conj}\label{conj nd = eps + 1}
Let $A \in \mathbb{C}^{n\times n}$, $\tilde{A}\in \mathbb{C}^{(n+1)\times (n+1)}$ with $n \geq 4$ be defined as in $(\ref{two matrices different size})$.  If  $\nd(A) = \varepsilon(A)+1$, then $\nd(\tilde{A}) = \varepsilon(\tilde{A})+1$.
\end{conj}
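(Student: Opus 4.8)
The plan is to establish the matching bounds $\varepsilon(\tilde A)+1\leq \nd(\tilde A)\leq \varepsilon(\tilde A)+1$, proving the lower and upper inequalities separately. Before either, I would record the exact relationship between the two commutators. Writing $N$ for the common $n\times n$ weighted nilpotent shift carrying $a_1,\dots,a_{n-1}$, one has $A=N+a_n e_n e_1^*$ and $\tilde A=\begin{pmatrix} N & a_n e_n\\ 0 & 0\end{pmatrix}$, so $[A,A^*]$ and $[\tilde A,\tilde A^*]$ are diagonal and agree in positions $2,\dots,n$, while the single entry $|a_1|^2-|a_n|^2$ of $[A,A^*]$ is replaced in $[\tilde A,\tilde A^*]$ by the pair $|a_1|^2$ (position $1$) and $-|a_n|^2$ (position $n+1$). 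Since $[a_1\neq 0]\geq[\,|a_1|>|a_n|\,]$ and $[a_n\neq 0]\geq[\,|a_n|>|a_1|\,]$, a direct sign count gives $i_+[\tilde A,\tilde A^*]\geq i_+[A,A^*]$ and $i_-[\tilde A,\tilde A^*]\geq i_-[A,A^*]$, and hence $\varepsilon(\tilde A)\in\{\varepsilon(A),\varepsilon(A)+1\}$, the value being pinned down by the signs of $|a_1|-|a_n|$ and of $i_+-i_-$ restricted to positions $2,\dots,n$. I would do this bookkeeping first, since both the target $\varepsilon(\tilde A)+1$ and the difficulty of the upper bound depend on which value occurs.

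For the upper bound I would build a normal completion of $\tilde A$ out of a minimal one for $A$, as in Example \ref{ex open question}: given $A_{ext}$ of size $n+\nd(A)$, one ``opens the cycle'' carried by the entry $a_n$ and reroutes it through a fresh coordinate, producing a completion of $\tilde A$ of defect $\nd(A)+1=\varepsilon(A)+2$. When $\varepsilon(\tilde A)=\varepsilon(A)+1$ this is exactly $\varepsilon(\tilde A)+1$, the desired bound, and it is the situation of the motivating example. The delicate case is $\varepsilon(\tilde A)=\varepsilon(A)$, where $\varepsilon(A)+2=\varepsilon(\tilde A)+2$ is one too large and a tighter completion—adding only a single coordinate to $A_{ext}$, so that $\nd(\tilde A)=\nd(A)$—must be produced by hand. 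A useful preliminary would be to check whether $\nd(A)=\varepsilon(A)+1$ can coexist with $\varepsilon(\tilde A)=\varepsilon(A)$ at all; if it cannot, the upper bound follows uniformly from the easy construction.

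For the lower bound I would show $\tilde A$ fails to attain the bound in \ref{bounds}, that is $\nd(\tilde A)\neq\varepsilon(\tilde A)$, via the linear-dependence criterion of \cite[Theorem 5.9.4]{BW} that drives Proposition \ref{prop nd = max}. Factoring $[\tilde A,\tilde A^*]=\tilde X\tilde X^*-\tilde Y\tilde Y^*$ with $\tilde X,\tilde Y$ of full column rank $\varepsilon(\tilde A)$, the equality $\nd(\tilde A)=\varepsilon(\tilde A)$ would force a linear dependence among $\tilde X,\tilde Y,\tilde A\tilde X,\tilde A^*\tilde Y$ and their admissible deformations. The strategy is to transfer the obstruction already present for $A$: the hypothesis $\nd(A)=\varepsilon(A)+1$ says precisely that the analogous vectors for $A$ remain independent under all small deformations, and those vectors are supported on positions $2,\dots,n$, which are untouched in passing to $\tilde A$. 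One then argues that the two fresh coordinates $1$ and $n+1$ contribute only directions independent of the existing obstruction, so the dependence required for $\nd(\tilde A)=\varepsilon(\tilde A)$ cannot be manufactured.

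The main obstacle is exactly this transfer, which is why the statement is only conjectural. The diagonal commutators are transparent, but the completion problem couples that data back to $A$ through the off-diagonal equations in $A_{ext}A_{ext}^*=A_{ext}^*A_{ext}$, and $A$ and $\tilde A$ differ precisely in the entry $a_n$ that governs the wrap-around coordinate; hence it is not automatic either that an obstruction for $A$ persists for $\tilde A$ or that the explicit completion produced for the upper bound is genuinely minimal. Making the heuristic ``fresh coordinates add only independent directions'' rigorous requires a version of \cite[Theorem 5.9.4]{BW} valid for arbitrary (unbalanced) inertia, together with an explicit description of the admissible factorizations of $[\tilde A,\tilde A^*]$. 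Securing that general criterion, and carrying out the tighter construction in the case $\varepsilon(\tilde A)=\varepsilon(A)$, are the two points at which the argument must be completed.
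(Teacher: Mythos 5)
The statement you are addressing is Conjecture~\ref{conj nd = eps + 1}: the paper offers no proof of it, so there is nothing to compare your argument against, and your proposal---which candidly flags its own missing steps---does not close the question either. Your preliminary bookkeeping is correct: the two commutators agree on positions $2,\dots,n$, the entry $|a_1|^2-|a_n|^2$ splits into $|a_1|^2$ and $-|a_n|^2$, and hence $\varepsilon(\tilde A)\in\{\varepsilon(A),\varepsilon(A)+1\}$. But both halves of your two-sided bound rest on unproven ingredients. The upper bound $\nd(\tilde A)\le\nd(A)+1$ is precisely Conjecture~\ref{nd inequality} of the paper, itself open: the ``open the cycle'' construction is only exhibited in Example~\ref{ex open question} for one matrix, and no general argument is given that rerouting $a_n$ through a fresh coordinate preserves normality of an \emph{arbitrary} minimal completion $A_{ext}$, whose extension blocks may couple to the wrap-around entry in ways that example does not display. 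Even granting it, you correctly observe that when $\varepsilon(\tilde A)=\varepsilon(A)$ the bound overshoots by one, and you neither rule that case out nor supply the tighter completion.

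The lower bound $\nd(\tilde A)\ne\varepsilon(\tilde A)$ is the more serious gap. The criterion of \cite[Theorem 5.9.4]{BW} invoked in Proposition~\ref{prop nd = max} characterizes normal defect \emph{one}; once $\varepsilon(\tilde A)\ge 2$ you need a rank-$k$ analogue describing exactly when $\nd=\varepsilon$, and no such criterion is stated in the paper or supplied by you. Moreover, the heuristic that the obstruction for $A$ ``transfers'' because it lives on positions $2,\dots,n$ is not sound as stated: the admissible factorizations $[\tilde A,\tilde A^*]=\tilde X\tilde X^*-\tilde Y\tilde Y^*$ genuinely involve the new coordinates $1$ and $n+1$ (where the inertia has changed), so the family of deformations one must exclude for $\tilde A$ is not a restriction of the family already excluded for $A$. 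The matrix in (\ref{nd unknown}) is exactly the test case: the paper reports that its normal defect is unknown, i.e.\ the authors could not decide between $\nd(\tilde A)=\varepsilon(\tilde A)$ and $\varepsilon(\tilde A)+1$ there, which is precisely the dichotomy your lower bound must resolve. As it stands, your proposal is a reasonable research plan, not a proof.
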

Note that for $n \leq 3$, we have that $\nd(A)=\varepsilon(A)$, so in that case the statement in Conjecture \ref{conj nd = eps + 1} is an empty statement.  If this conjecture is true, then it must be the case that the equality $\nd(\tilde{A})=\varepsilon(\tilde{A})$ does not hold in general for matrices $\tilde{A}$ of arbitrary size, with $\tilde{A}$ as in (\ref{two matrices different size}).

\section{Acknowledgements} 
Both authors were supported by NSF grant DMS 0901628.  Ryan D. Wasson performed the research as a Research Experience for Undergraduates (REU) project.

\section{Appendix}
Here we provide the details of the proof that the completion matrix given in (\ref{b_largest}) is normal (the proof for (\ref{c_largest}) and (\ref{a_largest}) will be omitted for the sake of brevity).  The entries of each matrix are real so we only need to check the equality $A_{ext}A_{ext}^T=A_{ext}^TA_{ext}$.  In this case we have $A_{ext}A_{ext}^T=$
\footnotesize
\begin{equation*}
 \left( \begin {array}{cccccc} {a}^{2}+{v_{{12}}}^{2}&0&0&v_{{12}}v_{{
42}}&aw_{{12}}+v_{{12}}z_{12}&0\\ \noalign{\medskip}0&{b}^{2}&0&0&0&0
\\ \noalign{\medskip}0&0&{c}^{2}+{v_{{31}}}^{2}&0&0&cw_{{24}}+v_{{31}}
z_{21}\\ \noalign{\medskip}v_{{12}}v_{{42}}&0&0&{v_{{42}}}^{2}&v_{{42
}}z_{12}&0\\ \noalign{\medskip}aw_{{12}}+v_{{12}}z_{12}&0&0&v_{{42}}
z_{12}&{w_{{12}}}^{2}+{z_{12}}^{2}&0\\ \noalign{\medskip}0&0&cw_{{24
}}+v_{{31}}z_{21}&0&0&{w_{{21}}}^{2}+{w_{{24}}}^{2}+{z_{21}}^{2}
\end {array} \right)
\end{equation*}
\normalsize
and $A_{ext}^TA_{ext}=$
\footnotesize
\begin{equation*}
 \left( \begin {array}{cccccc} {w_{{21}}}^{2}&0&0&w_{{21}}w_{{24}}&w_{
{21}}z_{21}&0\\ \noalign{\medskip}0&{a}^{2}+{w_{{12}}}^{2}&0&0&0&av_{
{12}}+w_{{12}}z_{12}\\ \noalign{\medskip}0&0&{b}^{2}&0&0&0
\\ \noalign{\medskip}w_{{21}}w_{{24}}&0&0&{c}^{2}+{w_{{24}}}^{2}&cv_{{
31}}+w_{{24}}z_{21}&0\\ \noalign{\medskip}w_{{21}}z_{21}&0&0&cv_{{31
}}+w_{{24}}z_{21}&{v_{{31}}}^{2}+{z_{21}}^{2}&0\\ \noalign{\medskip}0
&av_{{12}}+w_{{12}}z_{12}&0&0&0&{v_{{12}}}^{2}+{v_{{42}}}^{2}+{z_{12}}^{2}\end {array} \right). 
\end{equation*}
\normalsize
These are each symmetric, so we only need to consider the entries above and along the diagonal.  It is trivial to show that there is equality at the $(1,4)$, $(2,2)$, $(2,6)$, $(3,3)$, $(3,6)$, entries of each matrix above, so we will only consider the others.  First, $(1,1)$:
\begin{equation*}
a^2+v_{12}^2={a}^{2}+{\frac {{a}^{2} \left( {b}^{2}-{a}^{2} \right)  \left( {b}^{2}
-{c}^{2} \right) }{{b}^{2}{c}^{2}+{b}^{2}{a}^{2}-{a}^{2}{c}^{2}}}
={\frac {{a}^{2}{b}^{4}}{{b}^{2}{c}^{2}+{b}^{2}{a}^{2}-{a}^{2}{c}^{2}}} = w_{21}^2
\end{equation*}
Now $(1,5)$:
\begin{eqnarray*}
aw_{12}+v_{12}z_{12}&=&a\sqrt {{b}^{2}-{a}^{2}}-{a}^{3}{\frac {\sqrt{ \left( {b}^{2}-{a}^{2}
 \right)  \left( {b}^{2}-{c}^{2} \right)^2 }}{{b}^{2}{c}^{2}+{b}^{2}{a}^{
2}-{a}^{2}{c}^{2}}} \\
&=&\frac{ab^2c^2\sqrt{b^2-a^2}}{b^2c^2+b^2a^2-a^2c^2}\,\,\,\,=\,\,\,\,w_{21}z_{21}
\end{eqnarray*}
The entry $(4,4)$ is equivalent to $(1,1)$, but with $a$ and $c$ swapping roles.  Similarly, $(4,5)$ is equivalent to $(1,5)$.\\ \\
$(5,5)$:
\begin{eqnarray*}
w_{12}^2+z_{12}^2&=&{b}^{2}-{a}^{2}+{\frac {{a}^{4} \left( {b}^{2}-{c}^{2} \right) }{{b}^{
2}{c}^{2}+{b}^{2}{a}^{2}-{a}^{2}{c}^{2}}}\\
&=&\frac{b^4c^2+b^4a^2-2a^2b^2c^2}{{{b}^{
2}{c}^{2}+{b}^{2}{a}^{2}-{a}^{2}{c}^{2}}}\\
&=&\frac{\left(b^2-c^2\right)\left(b^2c^2+b^2a^2-a^2c^2\right)+c^2(b^2c^2+b^2a^2-a^2c^2)-a^2b^2c^2}{b^2c^2+b^2a^2-a^2c^2}\\
&=&b^2-c^2+\frac{c^4\left(b^2-a^2\right)}{b^2c^2+b^2a^2-a^2c^2}\,\,\,\,=\,\,\,\,v_{31}^2+z_{21}^2
\end{eqnarray*}
$(6,6)$:
\begin{eqnarray*}
w_{21}^2+w_{24}^2+z_{21}^2&=&\frac{a^2b^4+c^2\left(b^2-a^2\right)\left(b^2-c^2\right)+c^4\left(b^2-a^2\right)}{b^2c^2+b^2a^2-a^2c^2}\\
&=&\frac{b^2\left(b^2c^2+b^2a^2-a^2c^2\right)}{b^2c^2+b^2a^2-a^2c^2}\,\,\,\,=\,\,\,\,b^2
\end{eqnarray*}
and
\begin{eqnarray*}
v_{12}^2+v_{42}^2+z_{12}^2&=&\frac{a^2\left(b^2-a^2\right)\left(b^2-c^2\right)+c^2b^4+a^4\left(b^2-c^2\right)}{b^2c^2+b^2a^2-a^2c^2}\\
&=&\frac{b^2\left(b^2c^2+b^2a^2-a^2c^2\right)}{b^2c^2+b^2a^2-a^2c^2}\,\,\,\,=\,\,\,\,b^2
\end{eqnarray*}
Therefore (\ref{b_largest}) is normal.

\end{document}